\documentclass[11pt]{article}
\usepackage{amsthm, amsmath, amssymb, amsfonts, url, booktabs, tikz, setspace, fancyhdr, bm}
\usepackage{geometry}
\usepackage{hyperref, enumerate}
\usepackage[shortlabels]{enumitem}
\usepackage[babel]{microtype}
\usepackage[english]{babel}
\usepackage[capitalise]{cleveref}
\usepackage{comment}
\usepackage{bbm}
\usepackage{csquotes}

\usepackage{tikz}
\usepackage{graphicx}
\usepackage{float}
\usepackage[dvipsnames]{xcolor}
\usepackage{soul}
\usepackage{mathtools}
\usetikzlibrary{positioning, arrows.meta, shapes.geometric}
\usepackage[normalem]{ulem}

\counterwithin{figure}{section}

\newtheorem{theorem}{Theorem}[section]

\newtheorem{conj}[theorem]{Conjecture}
\newtheorem{lemma}[theorem]{Lemma}

\newtheorem{claim}[theorem]{Claim}

\newtheorem{remark}[theorem]{Remark}
\newtheorem{corollary}[theorem]{Corollary}
\newtheorem{proposition}[theorem]{Proposition}
\usetikzlibrary{decorations.pathmorphing}

\theoremstyle{definition}
\newtheorem{defn}[theorem]{Definition}
\newtheorem*{defn-non}{Definition}

\newcommand{\norm}[1]{\left\lVert #1\right\rVert}

\newlist{Case}{enumerate}{2}
\setlist[Case, 1]{%
    label           =   {\bfseries Case \arabic*.},
    labelindent=1em ,labelwidth=1.3cm, labelsep*=1em, leftmargin =!
}
\setlist[Case, 2]{%
    label           =   {\bfseries Subcase \arabic{Casei}.\arabic*.},
    labelindent=-1em ,labelwidth=1.3cm, labelsep*=1em, leftmargin =!
}

\newcommand{\E}{{\rm E}}

\newcommand{\ex}{\mathrm{ex}}

\newcommand{\cT}{\mathcal{T}}

\newcommand*{\abs}[1]{\lvert#1\rvert}

\title{The Tur\'an number of the Cartesian product of trees via star-flip}
\author{
Lanchao Wang\thanks{School of Mathematics, Nanjing University, Nanjing, China, and ECOPRO, Institute for Basic Science, 55 Expo-ro, Yuseong-gu, Daejeon, 34126, Korea. Email: lanchaowang@foxmail.com.}
\and
Caihong Yang\thanks{School of Mathematics and Physics, China University of Geosciences, Wuhan, China. Email: yangch@cug.edu.cn.}
}

\begin{document}
\date{}
\maketitle

\begin{abstract}
Motivated by Erd\H{o}s's conjecture  on the Tur\'an number
of
degenerate bipartite graphs, Brada\v{c}, Janzer, Sudakov and Tomon
proved that $
\ex(n,T \Box  P)=\Theta_{T,P}(n^{3/2})$ for every
nontrivial tree $T$ and every nontrivial path $P$, and conjectured that
the same order of magnitude holds for the Cartesian product of any two nontrivial
trees. We prove their conjecture. More generally, for every integer $r\ge2$, we introduce a class of
bipartite $r$-degenerate graphs, called $r$-star-flip graphs, that are obtained
from a seed tree by a sequence of local vertex-duplication
operations. We prove that every fixed $r$-star-flip graph $H$ satisfies
$\ex(n,H)=O_H(n^{2-1/r})$. Every Cartesian product of two trees is a $2$-star-flip graph, while the star-flip class also contains graphs that do not arise as such products. As a further application, our framework yields a new proof of
F\"uredi's theorem: if $H$ is a fixed bipartite graph in which at most
one vertex in one colour class has degree greater than $r$, then
$\ex(n,H)=O_H(n^{2-1/r})$. The key ingredient is a conditional-resampling procedure that extends
the tree branching random walk on the seed tree to a random
homomorphism of the entire star-flip graph, while preserving the
branching-random-walk distribution on every live tree.

\medskip
    \textbf{Keywords:} Tur\'{a}n number;
    $r$-degenerate graph; Cartesian product; bipartite graph
\end{abstract}

\section{Introduction}
For a graph $F$ and a positive integer $n$, the \emph{Tur\'an number}
of $F$, denoted by $\mathrm{ex}(n,F)$, is the maximum number of edges
in an $F$-free graph on $n$ vertices. The study of $\mathrm{ex}(n,F)$
is a central topic in extremal combinatorics. The celebrated
Erd\H{o}s--Stone--Simonovits theorem~\cite{ES1966,ES1946} states that
$\mathrm{ex}(n,F)=\left(1-\frac{1}{\chi(F)-1}+o(1)\right)\binom{n}{2}$,
where $\chi(F)$ is the chromatic number of $F$. This theorem determines
$\mathrm{ex}(n,F)$ asymptotically whenever $\chi(F)\ge3$. However, for
bipartite $F$, it only gives $\ex(n,F)=o(n^2)$, and determining even
the correct order of magnitude is notoriously difficult. The
K\H{o}v\'ari--S\'os--Tur\'an theorem~\cite{KST54} implies that every
fixed bipartite graph has a genuinely subquadratic Tur\'an number, but
there is no general principle that predicts the correct exponent; see
the survey of F\"uredi and Simonovits~\cite{FurediSimonovits2013}. Determining the possible Tur\'an exponents of bipartite graphs is
itself a major problem; see, for example,
\cite{BukhConlon,KangKimLiu,JiangQiu}.

A graph $H$ is called \emph{$r$-degenerate} if each of its subgraphs
has minimum degree at most $r$. Generalising the
K\H{o}v\'ari--S\'os--Tur\'an theorem, Erd\H{o}s~\cite{Erdos1967}
proposed the following conjecture in 1967.
\begin{conj}[Erd\H{o}s \cite{Erdos1967}]\label{conj-erdos}
Let $H$ be a bipartite $r$-degenerate graph. Then
$\ex(n,H)=O_H(n^{2-1/r})$.
\end{conj}

The conjecture is known when one side of a bipartition of $H$ has
maximum degree at most $r$, by work of F\"uredi~\cite{Furedi1991} and
Alon, Krivelevich and Sudakov~\cite{AKS2003}. It is also known for
several further families, including the $r$-degenerate blow-ups of
trees studied by Grzesik, Janzer and Nagy~\cite{GJN2002} and the more
general tree-degenerate graphs introduced by Jiang and
Longbrake~\cite{JL2023}. Related variants have recently been studied
in \cite{GaoLiuMaPikhurko2025,HouHuLiLiuYangZhang2026}. Nevertheless,
the conjecture remains open in general even for $r=2$. In this paper, we
introduce a new family of bipartite $r$-degenerate graphs for which the
conjectured bound holds.

Cartesian products provide a natural source of motivating examples.
For graphs $F$ and $G$, the Cartesian product $F\Box  G$ is the graph with vertex
set $V(F)\times V(G)$, in which $(u,v)$ and $(u',v')$ are adjacent  if and
only if either $u=u'$ and $vv'\in E(G)$, or $v=v'$ and $uu'\in E(F)$.
Brada\v{c}, Janzer, Sudakov and Tomon \cite{BJST2023} proved that
$\ex(n,T\Box  P)=\Theta_{T,P}(n^{3/2})$ for every nontrivial tree $T$
and every nontrivial path $P$. Their proof introduced a novel use of the tensor-power trick and exploited the linear structure of the path $P$ through a ladder construction. They further conjectured the following extension.

\begin{conj}[Brada\v{c}, Janzer, Sudakov and Tomon
\cite{BJST2023}]\label{conj:tree-product}
For any two trees $T$ and $S$ (each with at least one edge), there exist positive real
numbers $c$ and $C$ such that
$cn^{3/2}\leq \ex(n,T\Box S)\leq Cn^{3/2}$.
\end{conj}

The lower bound follows from the fact that $T\Box S$ contains a copy
of $C_4$, and hence every $C_4$-free graph is also $T\Box S$-free. In the special case of the grid $P_t\Box  P_t$, their proof gave the
upper bound $\ex(n,P_t\Box  P_t)\le e^{O(t^5)}n^{3/2}$, and they asked
to determine the correct dependence on $t$. Gao, Janzer, Liu and Xu
\cite{GJLX2025} improved this bound to
$\ex(n,P_t\Box  P_t)\le O(t^{3/2})n^{3/2}$ and also developed related
embedding methods for several graphs arising from geometric
quadrangulations, such as the even prisms $C_{2\ell}\Box  K_2$. More
recently, Zhao, Cheng, Chi, Gy\H{o}ri, Tompkins and Wang
\cite{ZCCGTW2026} studied the graph $K_2\Box S_t$, which consists of $t$ copies of
$C_4$ sharing a common edge, and obtained results on both ordinary and bipartite Tur\'{a}n numbers.

We prove Conjecture~\ref{conj:tree-product} as an application of a more
general embedding theorem for a new class of bipartite $r$-degenerate
graphs, thereby providing further evidence for Conjecture \ref{conj-erdos}.
To motivate the construction, consider the grid $P_t\Box P_t$. Start
with the tree formed by its first row and first column. Each remaining
vertex $v_{i,j}$ can then be introduced by duplicating
$v_{i-1,j-1}$ with respect to the two-edge star
$v_{i-1,j}v_{i-1,j-1}v_{i,j-1}$. Iterating this operation produces
the entire grid. The same operation may be performed inside any
currently available tree, leading to the following class.

\begin{defn}[Star-flip presentations and star-flip graphs]
\label{def:star-flip}
Fix an integer $r\ge2$. An \emph{$r$-star-flip presentation} of a
graph $H$ consists of a sequence
$
H_0\subset H_1\subset\cdots\subset H_m=H
$
together with families $\cT_0,\cT_1,\ldots,\cT_m$ of trees, whose
members are called \emph{live trees}. These objects are 
constructed
recursively as follows.

The initial graph $H_0=R$ is a tree, called the \emph{seed tree}, and
$\cT_0$ is the family of all subtrees of $R$.
Suppose that $H_{i-1}$ and $\cT_{i-1}$ have been defined for some
$i\in[m]$. Choose a live tree $T_i\in\cT_{i-1}$ and a vertex
$c_i\in V(T_i)$ such that
$
1\le k_i:=d_{T_i}(c_i)\le r.
$
Introduce a new vertex $x_i\notin V(H_{i-1})$ and join it precisely to
the vertices of $N_{T_i}(c_i)$. Thus
\[
V(H_i)=V(H_{i-1})\cup\{x_i\}
\ \ \text{ and }\ \ 
E(H_i)=E(H_{i-1})
\cup\{x_i a:a\in N_{T_i}(c_i)\}.
\]
We now update the family of live trees. Let
$
\mathcal S_i
=
\{T\in\cT_{i-1}:c_i\in V(T)\text{ and }N_T(c_i)=N_{T_i}(c_i)\}.
$
For every $T\in\mathcal S_i$, let $T^{x_i}$ be the tree obtained from $T$ by
replacing $c_i$ with $x_i$. Every old live tree remains live, and we additionally declare every
subtree of each $T^{x_i}$ with $T\in\mathcal S_i$ to be live. Hence
\[
\cT_i
=
\cT_{i-1}
\cup
\bigcup_{T\in\mathcal S_i}
\{Q:Q\text{ is a subtree of }T^{x_i}\}.
\]
The passage from $(H_{i-1},\cT_{i-1})$ to
$(H_i,\cT_i)$ is called a \emph{star flip of size $k_i$}, performed
at $c_i$ with neighbour set $N_{T_i}(c_i)$. A graph $H$ is called an \emph{$r$-star-flip graph} if it admits an
$r$-star-flip presentation.
\end{defn}

It follows directly from the construction that every $r$-star-flip
graph is bipartite and $r$-degenerate. Indeed, fix a bipartition of the
seed tree $R$ and extend it inductively. At step $i$, the vertices
$a_{i,1},\ldots,a_{i,k_i}$ all lie in the colour class opposite to
$c_i$ in the live tree $T_i$, so placing $x_i$ in the same colour class
as $c_i$ preserves bipartiteness. For degeneracy, delete the added
vertices in the reverse order $x_m,x_{m-1},\ldots,x_1$. When $x_i$ is deleted, every neighbour of $x_i$ that was introduced at a step $j>i$ has already been removed, while $a_{i,1},\ldots,a_{i,k_i}$ are still present.
Hence its remaining degree is exactly $k_i\le r$. The remaining graph
is the seed tree $R$, which can be deleted leaf by leaf. Thus $H$ is
$r$-degenerate.

Our main result establishes Erd\H{o}s's conjecture for all
$r$-star-flip graphs.

\begin{theorem}\label{thm:main}
Fix an integer $r\ge2$. If $H$ is an $r$-star-flip graph, then
$\ex(n,H)=O_H(n^{2-1/r})$.
\end{theorem}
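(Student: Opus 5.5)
We will use the tree branching random walk method, extended by conditional resampling as the abstract suggests. First, by standard reductions we may assume the host graph $G$ on $n$ vertices is $K$-almost-regular with all degrees $\Theta(d)$, where $d=Cn^{1-1/r}$ and $C$ is a large constant depending on $H$. By Jiang--Seiver type regularisation, or Erd\H{o}s--Simonovits almost-regular subgraphs, a counterexample with $Cn^{2-1/r}$ edges contains such a subgraph with comparable exponent. It then suffices to find a homomorphic copy of $H$ in $G$ that is injective, i.e.\ to show that a random homomorphism $\phi:H\to G$ is non-degenerate with positive probability.

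\textbf{Construction of the random homomorphism.} Root the seed tree $R$ and let $\phi|_R$ be the tree branching random walk: the root goes to a uniform (or stationary) vertex, and each child goes to a uniformly random neighbour of its parent's image. A standard fact is that for any subtree $Q$ of $R$, $\phi|_Q$ has the same law as the branching random walk on $Q$ started from its stationary distribution, independent of the choice of root. Process the star flips in order. At step $i$, with $N_{T_i}(c_i)=\{a_1,\dots,a_k\}$, sample $\phi(x_i)$ from the conditional law of $\phi(c_i)$ given $\phi(a_1),\dots,\phi(a_k)$ under the branching random walk on $T_i$. Concretely, this is a vertex $v$ of the common neighbourhood of the $\phi(a_j)$, weighted by $\prod_j 1/\deg(\phi(a_j))$ times the appropriate factors from the rest of the tree. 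By the Markov property of tree-indexed walks, this law depends only on the images of the neighbours, so it is the same for every $T\in\mathcal S_i$. The key invariant, proved by induction on $i$, is that for every live tree $T\in\cT_i$, $\phi|_T$ is distributed as the branching random walk on $T$. For $T^{x_i}$ this holds because $(\phi(x_i),\phi|_{T-c_i})$ has the same joint law as $(\phi(c_i),\phi|_{T-c_i})$: the new vertex is a conditionally independent copy of $c_i$ given its separator $N_T(c_i)$. Subtrees inherit the property by marginalisation. The subtle point is that the claim must hold for $T^{x_i}$ even though the other vertices of $T$ may themselves have been introduced by earlier flips. This works because the invariant for $T$ already holds, and the resampling uses only the Markov blanket of $c_i$ in $T$.

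\textbf{Avoiding collisions.} We need $\Pr[\phi(u)=\phi(w)]$ to be small for every pair $u\ne w$ of $H$. For pairs lying in a common live tree, this follows from branching-random-walk estimates in an almost-regular graph: the walk distance between them is positive, and collision probabilities are $O(1/d)$-type or $O(1/n)$-type. For the pair $x_i,c_i$, the colliding event means that a resampled copy coincides with the original. Given the $k\le r$ neighbour images, $\phi(x_i)$ is spread over a common neighbourhood. We must show that this common neighbourhood is typically large, namely $\gg 1$ in effective size. This is exactly where the exponent $2-1/r$ enters. By convexity (a KST/dependent-random-choice count), $k$-tuples that are images of a $k$-star under the walk have common neighbourhoods of size about $d^k/n^{k-1}\ge C^{k}$ on average, weighted correctly. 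Concretely, $\Pr[\phi(x_i)=\phi(c_i)]=\mathbb E\bigl[\sum_v p(v)^2\bigr]$, and by a Cauchy--Schwarz/Sidorenko-type computation for stars this is $O(C^{-r})$. Collisions involving a new vertex and an arbitrary earlier vertex are handled the same way: condition on everything except $\phi(x_i)$, and bound the maximal atom of the conditional law of $\phi(x_i)$ by an $L^2$ estimate.

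\textbf{Main obstacle.} I expect the hardest step to be this last one: the conditional law of $\phi(x_i)$ can concentrate on atypical star-images whose common neighbourhood is small. We must show that such bad configurations carry small probability uniformly over all $O_H(1)$ pairs. I would first try a union bound with the $L^2$ atom estimate. If that is too lossy, I would fall back on discarding ``light'' $k$-tuples, whose common neighbourhood is at most some constant $L$, by a cleaning step on $G$: delete edges participating in too many light configurations, as in F\"uredi/Alon--Krivelevich--Sudakov. Choosing $C$ large relative to $|H|$ and $L$ then completes the argument, since summing all collision probabilities gives less than $1$.
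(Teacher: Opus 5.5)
Your overall architecture matches the paper's. You run a branching random walk on the seed tree, draw $\Phi(x_i)$ from the conditional law of $\Phi(c_i)$ given its $T_i$-neighbours, keep $\mu_T$ exactly invariant on every live tree, and take a union bound in which each pair is charged to its later endpoint $x_i$ via $q_{U_i}(\Phi(y))\le\norm{q_{U_i}}_\infty$.

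\textbf{The gap.} The proposal is incomplete at the one step that produces the exponent. You assert $\E\bigl[\sum_z q_U(z)^2\bigr]=O(C^{-r})$ ``by a Cauchy--Schwarz/Sidorenko-type computation'' without carrying it out. You then name as the main obstacle that the conditional law may sit on leaf-tuples with small common neighbourhood, and propose a cleaning step as a fallback.

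\textbf{The missing idea.} Under $\mu_{S_k}$, the leaf-tuple $U$ is size-biased by exactly the normalising constant of the resampling law. From the product formula for $\mu_T$, the conditional law of $\Phi(c)$ is $q_{\bm u}(z)=d(z)^{-(k-1)}\mathbf 1_{\{z\in\bigcap_j N_G(u_j)\}}/Z_k(\bm u)$. Summing the same formula over $z$ gives $\Pr(U=\bm u)=Z_k(\bm u)/2e(G)$. Hence
\[
\Pr(U=\bm u)\,\norm{q_{\bm u}}_\infty=\frac{1}{2e(G)}\max_{z\in\bigcap_j N_G(u_j)}\frac{1}{d(z)^{k-1}}\le\frac{1}{2e(G)\,\delta(G)^{k-1}}.
\]
Summing over at most $N^k$ tuples and using only $2e(G)\ge N\delta(G)$ gives $\E\norm{q_U}_\infty\le N^{k-1}/\delta(G)^k\le C^{-r}$.

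So tuples with small common neighbourhood are sampled with proportionally small probability, and the obstacle you anticipate never arises. No typicality argument, Sidorenko-type inequality or cleaning is needed. The cleaning fallback would in any case be hard to reconcile with the exact invariant, since editing $G$ changes both the walk measure and the codegrees. Your $L^2$ variant also goes through by the same cancellation, since $\E\sum_z q_U(z)^2\le N^{k-1}/\delta(G)^k$.

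\textbf{Smaller points.}
\begin{itemize}
\item Your description of the resampling weights is off. The factors $1/d(\Phi(a_j))$ are constant once the $a_j$ are fixed, and the law does not involve the rest of the tree. The weight is $d(z)^{-(k-1)}$ on the common neighbourhood, and this explicit form is exactly what makes the computation above work.
\item The root must be chosen exactly degree-proportionally, not uniformly, because the invariant on live trees has to hold exactly rather than approximately.
\item The almost-regular reduction is unnecessary. Deleting vertices of degree below $e(F)/n$ already yields $G$ with $\delta(G)\ge Cn^{1-1/r}$, and this is all the estimate uses.
\item When bounding $\Pr(\Phi(x_i)=\Phi(y))$, condition only on the images exposed before $x_i$, not on ``everything except $\phi(x_i)$''. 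Conditioning on later flips changes the law of $\Phi(x_i)$.
\end{itemize}
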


The Cartesian product of two trees can be constructed using only
$2$-star flips. This proves Conjecture~\ref{conj:tree-product} and, in fact, yields an
upper bound whose leading constant is
$O\bigl(|V(T)|\,|V(S)|\bigr)$.

\begin{theorem}\label{cor:tree-products}
Let $T$ and $S$ be two trees, each containing at least one edge. Then
\[
\ex(n,T\Box  S)=\Theta_{T,S}(n^{3/2}).
\]
\end{theorem}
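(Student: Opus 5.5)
The lower bound is immediate: $T\Box S$ contains $K_2\Box K_2=C_4$, and there are $n$-vertex $C_4$-free graphs with $\Omega(n^{3/2})$ edges. For the upper bound we apply Theorem~\ref{thm:main} with $r=2$. It then suffices to exhibit a $2$-star-flip presentation of $T\Box S$.

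\emph{Seed tree and orderings.} Root $T$ at $t_0$ and $S$ at $s_0$. Order $V(T)=\{t_0,t_1,\dots,t_p\}$ so that each $t_i$ with $i\ge1$ has its parent $t_i^-$ earlier; order $V(S)=\{s_0,\dots,s_q\}$ in the same way. The seed tree is
\[
R=(T\times\{s_0\})\cup(\{t_0\}\times S).
\]
This is a tree: it consists of two trees meeting at the single vertex $(t_0,s_0)$. We add the remaining vertices $(t_i,s_j)$ with $i,j\ge1$ in lexicographic order of $(i,j)$. The vertex $(t_i,s_j)$ is obtained by a $2$-star flip at $c=(t_i^-,s_j^-)$ with neighbour set $\{(t_i,s_j^-),(t_i^-,s_j)\}$. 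In $T\Box S$ these two vertices are exactly the neighbours of $(t_i,s_j)$ that are added earlier, so after all flips we obtain precisely $T\Box S$.

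\emph{Choosing the live trees.} This is the main task. The live tree $T_{ij}$ used at step $(i,j)$ must contain $c$ and both of its neighbours, and in $T_{ij}$ the vertex $c$ must have degree exactly $2$. We build $T_{ij}$ as a union of two paths with middle vertex $c$: a ``row'' path and a ``column'' path. For example, take the path from $(t_i,s_j^-)$ through $c$ and then along the $S$-direction back toward $(t_i^-,s_0)$. We must then check that the resulting tree is live at that moment.

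The cleanest way to organise this is an invariant, proved by induction in the lexicographic order. For each $i$, after row $i$ has been processed, every tree of the form
\[
(\{t_i\}\times S')\cup(P\times\{s\}),
\]
where $S'$ is a subtree of $S$ and $P$ is a path in $T$, is live, provided all of its vertices are already present. The invariant is maintained because flipping at $c$ replaces $c$ by $x=(t_i,s_j)$ in any live tree in which $c$ has the same two neighbours. This flip carries the tree ``comb'' of row $i^-$ to row $i$, one vertex at a time. Concretely, the flip at $(t_i^-,s_j^-)$ turns the tree obtained from $\{t_i^-\}\times(\text{path }s_0\dots s_j^-\dots)$, with branches in the $T$-direction attached, into the same tree shifted to $t_i$ in the $s_j$ coordinate. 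The base case comes from $\cT_0$, which contains all subtrees of $R$.

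\emph{Main obstacle.} The delicate point is verifying that, at step $(i,j)$, some live tree has $c$ of degree exactly $2$ with neighbours $(t_i,s_j^-)$ and $(t_i^-,s_j)$. The edge $c$--$(t_i^-,s_j)$ lies in the $S$-direction on row $t_i^-$. The edge $c$--$(t_i,s_j^-)$ lies in the $T$-direction at column $s_j^-$; that vertex was added at step $(i,j^-)$ (or is in $R$ when $j^-=0$). I would attempt to track the invariant with the explicit live trees
\[
L_{i,j}=(\{t_i^-\}\times S)\cup(\{t_i\}\times S_{<j})\cup\{\text{the $T$-edge at }s_0\},
\]
where $S_{<j}$ is the subtree on the already processed vertices of $S$, and to show that flipping the leaf-to-be turns $L_{i,j}$ into a tree containing $L_{i,j+1}$. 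Subtrees of $L_{i,j}$ then supply the required degree-$2$ star at $c$. Rows $t_i^-$ are themselves obtained because at the moment row $t_i^-$ finished, $\{t_i^-\}\times S$ together with a connecting $T$-path was live. Once this bookkeeping is confirmed, Theorem~\ref{thm:main} gives $\ex(n,T\Box S)=O(n^{3/2})$.
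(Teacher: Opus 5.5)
Your overall framework matches the paper: the $C_4$ lower bound, the seed tree $R=(T\times\{s_0\})\cup(\{t_0\}\times S)$, and the flip adding $(t,s)$ at $(t^-,s^-)$ with neighbour set $\{(t,s^-),(t^-,s)\}$. Your edge count is right, and your lexicographic order is a legitimate processing order. The gap is in the one step with real content: showing that a live tree exists in which $c$ has exactly the required two neighbours. The bookkeeping you sketch for this is false.

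A flip replaces $c_i$ by $x_i$. The new live trees containing $x_i$ do not contain $c_i$, and no later flip can reintroduce $c_i$. Hence no live tree ever contains both $(t^-,s^-)$ and an already added $(t,s)$. Your $L_{i,j}$ contains all of $\{t_i^-\}\times S$ together with $(t_i,s)$ for each $s\in S_{<j}$. So it is never live once $S_{<j}\neq\{s_0\}$. For the same reason, flipping $L_{i,j}$ cannot give a tree containing $L_{i,j+1}$, since the flipped tree has lost $(t_i^-,s_j^-)$.

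The comb invariant also fails. A live tree is updated only if $c$ has \emph{exactly} the two flip neighbours in it, so whole rows cannot be carried across when $S$ branches. Take $T=t_0t_1$ and $S$ the star with centre $s_0$ and leaves $s_1,s_2$. The only live trees containing $(t_1,s_1)$ are subtrees of the path $(t_1,s_0)(t_1,s_1)(t_0,s_1)$, and symmetrically for $s_2$. So the fully present row $\{t_1\}\times S$ is not live, contradicting your invariant. Separately, your ``for example'' path gives $c$ the neighbour $(t_i^-,(s_j^-)^-)$ instead of $(t_i^-,s_j)$.

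The missing idea is to track a live \emph{path} separately for each target vertex. The key fact is that the update rule acts on every live tree in which $c_i$ has neighbourhood $N_{T_i}(c_i)$, no matter which tree was used to perform the flip. For a target $(t,s)$, write the root paths as $t_0,\dots,t_p=t$ and $s_0,\dots,s_q=s$. Start from the staircase $(t_0,s_q),\dots,(t_0,s_0),(t_1,s_0),\dots,(t_p,s_0)$, which lies in $R$.

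Flips of vertices outside $\{(t_a,s_b):1\le a\le p,\ 1\le b\le q\}$ cannot have both flip neighbours on this path, so the path stays live and unchanged. Because parents precede children in your orderings, your order restricts to lexicographic order on $(a,b)$. One checks inductively that, just before $(t_a,s_b)$ is added, the tracked live path is
\[
(t_0,s_q),\dots,(t_{a-1},s_q),(t_{a-1},s_{q-1}),\dots,(t_{a-1},s_{b-1}),(t_a,s_{b-1}),\dots,(t_a,s_0),(t_{a+1},s_0),\dots,(t_p,s_0).
\]
This path contains the corner $(t_{a-1},s_b)-(t_{a-1},s_{b-1})-(t_a,s_{b-1})$. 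The flip turns it into the same form with $b+1$, or with $(a+1,1)$ after $b=q$. Taking $(a,b)=(p,q)$ gives the required live star at $(t^-,s^-)$. This is the paper's tracked-path argument, which uses the depth-sum order in place of your lexicographic one.
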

To illustrate that the class of star-flip graphs extends well beyond
Cartesian products of trees, we recover a classical theorem of
F\"uredi~\cite{Furedi1991}. This theorem was reproved using dependent
random choice by Alon, Krivelevich and Sudakov~\cite{AKS2003}, and also
follows from the more general random-walk framework of Grzesik, Janzer
and Nagy~\cite{GJN2002}. Although our approach is related in spirit to
the latter, it uses a different mechanism: instead of a random walk on
an auxiliary graph of $r$-sets obtained through supersaturation, we
work directly in the host graph and extend a tree branching random
walk through conditional resampling.

\begin{corollary}[F\"uredi \cite{Furedi1991}]
\label{cor:exceptional-degree}
Fix an integer $r\ge2$, and let $L$ be a fixed bipartite graph. Suppose that one colour class of
$L$ contains a vertex $v_0$ such that every other vertex in that colour
class has degree at most $r$. Then
$\ex(n,L)=O_L(n^{2-1/r})$.
\end{corollary}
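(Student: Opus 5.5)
We will deduce the corollary from Theorem~\ref{thm:main} by showing that $L$ is a subgraph of some $r$-star-flip graph $H$. Since $\ex(n,\cdot)$ is monotone under taking subgraphs, this gives $\ex(n,L)\le \ex(n,H)=O_H(n^{2-1/r})$, and $H$ depends only on $L$.

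Let $A$ be the colour class containing $v_0$, and let $B$ be the other class. Every vertex of $A\setminus\{v_0\}$ has degree at most $r$. Isolated vertices of $L$ can be dropped, so we may assume that each $a\in A\setminus\{v_0\}$ has $1\le d(a)\le r$. We may also assume that every vertex of $B$ is adjacent to $v_0$. If some $b\in B$ is not, we simply add the edge $v_0b$; the new graph still satisfies the hypothesis and contains $L$.

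We take as seed tree $R$ the star with centre $v_0$ and leaf set $B$. Each $a\in A\setminus\{v_0\}$ is added by one star flip, processed in any order. Let $N(a)=\{b_1,\dots,b_k\}\subseteq B$ with $1\le k\le r$. The substar $T$ of $R$ with centre $v_0$ and leaves $b_1,\dots,b_k$ is a subtree of $R$, so it lies in $\cT_0$, and live trees are never removed, so $T\in\cT_{i-1}$ at every later step. Moreover $d_T(v_0)=k\in[1,r]$. Performing the star flip with $T_i=T$ and $c_i=v_0$ creates a new vertex $x_i$ joined exactly to $N_T(v_0)=N(a)$, and we identify $x_i$ with $a$. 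After all of $A\setminus\{v_0\}$ has been processed, the resulting graph $H$ has vertex set $V(L)$ and edge set $E(L)$ together with the possibly added edges $v_0b$. Hence $L\subseteq H$.

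Each flip is performed on a subtree of the seed, which is always available, so the updating of live trees in Definition~\ref{def:star-flip} plays no role. Repeated neighbourhoods cause no problem, since the definition places no restriction on performing several flips with the same star. All of the difficulty therefore lies in Theorem~\ref{thm:main}, and the only points needing care are the reductions above: discarding isolated vertices (a vertex of degree $0$ cannot be created by a flip of size at least $1$) and adding the edges to $v_0$.
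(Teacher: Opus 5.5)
Your proposal is correct and matches the paper's argument. After discarding isolated vertices, you join $v_0$ to the whole opposite class and take that star as the seed tree. You then add each remaining vertex $v$ of $v_0$'s class by a star flip of size $d_L(v)\le r$ on the substar of the seed with centre $v_0$ and leaf set $N_L(v)$, and conclude via Theorem~\ref{thm:main} and monotonicity of $\ex(n,\cdot)$. The only difference is notation: you call $v_0$'s class $A$, whereas the paper's Proposition~\ref{prop:exceptional-supergraph} calls it $B$.
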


To deduce this from \Cref{thm:main}, let $A$ be the colour class
opposite to $v_0$, and add all missing edges between $v_0$ and $A$.
Take the resulting star with centre $v_0$ and leaf set $A$ as the seed
tree. For every other vertex $v$ in the colour class of $v_0$, the
substar with centre $v_0$ and leaf set $N_L(v)$ is live. Duplicating
its centre introduces a vertex whose neighbourhood is precisely
$N_L(v)$, and this is a star flip of size $d_L(v)\le r$. Repeating this
for all such vertices produces an $r$-star-flip supergraph of $L$.
Hence \Cref{thm:main} implies
$\ex(n,L)=O_L(n^{2-1/r})$.

The proof of \Cref{thm:main} constructs a random homomorphism from $H$
to the host graph $G$ and shows that it is injective with positive
probability. We begin with the tree branching random walk, a probability
measure on tree homomorphisms used in work on Sidorenko's conjecture
\cite{CKLL2018}, and extend it from the seed tree to the entire
star-flip graph. At each flip, the new vertex is sampled from the
conditional distribution of the vertex that it duplicates. This
preserves the branching-random-walk distribution on every live tree.

The rest of the paper is organised as follows. In
Section~\ref{sec:definition}, we develop the random star-flip embedding
and establish its properties. In Section~\ref{sec:collisions},
we prove Theorem~\ref{thm:main}. In Section~\ref{sec:applications}, we
derive the applications to Cartesian products of trees and to bipartite
graphs with one exceptional vertex.

\paragraph{Additional note.}
Shortly after the first version of this paper was posted on arXiv,
OpenAI~\cite{OpenAI2026} announced a counterexample to Erd\H{o}s's
Conjecture~\ref{conj-erdos}.  In light of this development, the results of the present
paper should be viewed not as evidence for the conjecture in full
generality, but as part of the broader problem of identifying natural
classes of bipartite $r$-degenerate graphs for which the bound
$O_H(n^{2-1/r})$ continues to hold.
\section{The star-flip random embedding}\label{sec:definition}
In this section, we develop a random embedding method for
$r$-star-flip graphs. We first recall the tree branching random walk.
We then identify the conditional distribution of the image of a vertex
given the images of all other vertices, and use this distribution to
expose the vertices introduced by the flips. Throughout this section, let $G$ be a graph with no isolated vertices,
and write $N=|V(G)|$. We write $d_G(x)$, or simply $d(x)$, for the
degree of $x$ in $G$, and $\delta(G)$ for the minimum degree of $G$.

\subsection{Tree branching random walks}\label{sec:brw}
We use the tree branching random walk described by Conlon, Kim, Lee and
Lee in \cite[Section~2.1]{CKLL2018}.

\begin{defn}\label{def:brw}
Let $T$ be a tree rooted at $\rho\in V(T)$. A random homomorphism
$\Phi:T\to G$ is generated as follows. First choose the
image of $\rho$
according to $\Pr(\Phi(\rho)=x)=d(x)/2e(G)$ for every $x\in V(G)$.
Orient every edge of $T$ away from $\rho$. Then, conditional on the
image of its parent, map each child independently and uniformly to a
neighbour of that image. We denote the resulting probability
distribution on $\operatorname{Hom}(T,G)$ by $\mu_T$.
\end{defn}

The degree-proportional choice of the image of the root ensures that
the resulting distribution is independent of the choice of root. For
completeness, we include a short proof.

\begin{lemma}[\cite{CKLL2018}]\label{lem:brw-formula}
For every homomorphism $\phi:T\to G$,
\begin{equation}\label{eq:brw-formula}
\mu_T(\phi)=\frac{1}{2e(G)}\prod_{v\in V(T)}\frac{1}{d(\phi(v))^{d_T(v)-1}}.
\end{equation}
Thus $\mu_T$ is independent of the choice of root.
\end{lemma}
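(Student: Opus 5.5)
We compute $\mu_T(\phi)$ directly from the sampling procedure in Definition~\ref{def:brw} for an arbitrary root $\rho$. We then rewrite the resulting product in a form that visibly does not depend on $\rho$.

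Fix a root $\rho$ and orient the edges of $T$ away from $\rho$. Every vertex $v\neq\rho$ then has a unique parent $p(v)$, and the events in the construction are sequential. Hence the probability that $\Phi=\phi$ is
\[
\mu_T(\phi)=\frac{d(\phi(\rho))}{2e(G)}\prod_{v\in V(T)\setminus\{\rho\}}\frac{1}{d(\phi(p(v)))}.
\]
This uses two facts. The root is mapped to $\phi(\rho)$ with probability $d(\phi(\rho))/2e(G)$. Conditional on the image of $p(v)$, the child $v$ is mapped to the particular neighbour $\phi(v)$ of $\phi(p(v))$ with probability $1/d(\phi(p(v)))$. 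That $\phi(v)$ really is a neighbour holds because $\phi$ is a homomorphism, so each factor is nonzero.

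Next, we regroup the product according to parents. A vertex $u$ appears as $p(v)$ once for each of its children. The number of children of $u$ is $d_T(u)$ if $u=\rho$, and $d_T(u)-1$ otherwise. So the product over $v\neq\rho$ equals
\[
d(\phi(\rho))^{-d_T(\rho)}\prod_{u\neq\rho}d(\phi(u))^{-(d_T(u)-1)}.
\]
Multiplying by the root factor $d(\phi(\rho))$ turns the exponent at $\rho$ into $-(d_T(\rho)-1)$, matching all the other vertices. This gives exactly \eqref{eq:brw-formula}.

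The right-hand side of \eqref{eq:brw-formula} involves only $e(G)$, the degrees $d_T(v)$ and the degrees $d(\phi(v))$. None of these depends on $\rho$, so $\mu_T$ is independent of the root. There is no real obstacle here. The only point needing care is the count of children, with the root's extra child exactly compensated by the degree-proportional initial choice. We also note that the case where $T$ is a single vertex is consistent, since the formula gives $d(\phi(\rho))/2e(G)$ there.
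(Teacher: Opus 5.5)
Your proposal is correct and follows essentially the same argument as the paper. Both proofs take the root factor $d(\phi(\rho))/2e(G)$ and a factor $1/d(\phi(p(v)))$ for each parent--child edge. They then regroup using that the root has $d_T(\rho)$ children and every other vertex has $d_T(v)-1$ children.
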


\begin{proof}
The choice of the root image contributes $d(\phi(\rho))/(2e(G))$. Every
oriented edge from a parent $v$ to a child contributes the factor
$1/d(\phi(v))$. The root has $d_T(\rho)$ children, whereas every
non-root vertex $v$ has $d_T(v)-1$ children. Multiplying all factors
gives \eqref{eq:brw-formula}.
\end{proof}
The distribution is consistent under restriction to subtrees.
\begin{lemma}[\cite{CKLL2018}]\label{lem:subtree}
Let $T'$ be a subtree of a tree $T$, and let $\Phi$ be sampled
according to $\mu_T$. Then $\Phi|_{T'}$ is distributed according to
$\mu_{T'}$.
\end{lemma}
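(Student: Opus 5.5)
The plan is to use the independence of the root choice from \Cref{lem:brw-formula}: root $T$ at a vertex $\rho$ of $T'$, so that the sampling of $\Phi$ itself exposes $\Phi|_{T'}$ first and the rest of $T$ afterwards. Since $T'$ is a subtree containing $\rho$, it is closed under taking parents. Concretely, every vertex of $T'$ other than $\rho$ has its parent, with respect to the orientation away from $\rho$, inside $T'$: the unique $\rho$--$v$ path in $T$ is also the unique path in $T'$. Hence we may generate $\Phi$ in two stages. First we generate the images of the vertices of $T'$, starting with $\rho$ and processing vertices in breadth-first order within $T'$. Then we generate the images of the remaining vertices of $T$.

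In the first stage, $\Phi(\rho)$ is chosen with probability $d(x)/2e(G)$. Each vertex of $T'$ other than $\rho$ is then mapped uniformly to a neighbour of the image of its parent, independently of everything else, and its parent lies in $T'$. This is exactly the procedure of Definition~\ref{def:brw} applied to $T'$ rooted at $\rho$. The only point to verify is that the order in which the children are exposed does not matter, because each child is sampled independently given its parent's image. Thus the law of $\Phi|_{T'}$ is $\mu_{T'}$, where $\mu_{T'}$ is computed with root $\rho$, and by \Cref{lem:brw-formula} this does not depend on the root.

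As an alternative and more computational check, I would sum formula \eqref{eq:brw-formula} over all extensions, peeling off one leaf of $T$ outside $T'$ at a time. If $\ell$ is such a leaf with neighbour $u$, summing over $\Phi(\ell)\in N(\Phi(u))$ contributes $d(\Phi(u))$ possible images. This exactly cancels one factor of $d(\Phi(u))$ in the denominator, since $d_T(u)$ drops by one when $\ell$ is deleted, while $\ell$ itself contributes $d^{0}=1$. Induction on $|V(T)|-|V(T')|$ then finishes, using that any tree properly containing a subtree $T'$ has a leaf outside $T'$.

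The only mild obstacle is this last combinatorial fact, or equivalently the parent-closure of $T'$ in the first approach. Both are immediate: a longest path in $T$ has two leaf endpoints, and if both lay in $T'$ then every vertex would be in $T'$. More carefully, one can consider a vertex outside $T'$ that is farthest from $T'$; it is a leaf. I would present the first approach, root change plus sequential sampling, as the main proof and use the leaf-peeling as a sanity check.
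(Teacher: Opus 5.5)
Your proof is correct. The paper does not prove this lemma itself; it is quoted from \cite{CKLL2018}. Your main argument is the standard justification: reroot $T$ at some $\rho\in V(T')$ using Lemma~\ref{lem:brw-formula}, note that $T'$ is closed under taking parents, and expose the images of $V(T')$ first. Those images are then produced by exactly the procedure of Definition~\ref{def:brw} for $T'$, while the remaining vertices are sampled afterwards and cannot affect this marginal. Your leaf-peeling computation from \eqref{eq:brw-formula} is an equally complete alternative, in the same spirit as the paper's proof of Lemma~\ref{lem:brw-formula}.

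One side remark needs correcting. It is not true that if both endpoints of a longest path of $T$ lie in $T'$ then $T'=T$: take $T=K_{1,3}$ with centre $c$ and leaves $x,y,z$, and let $T'$ be the path $xcy$. Your second justification is the right one. Let $v\notin V(T')$ be farthest from $T'$. Since $T'$ is connected and avoids $v$, it lies in a single component of $T-v$. Any neighbour of $v$ outside that component would then be farther from $T'$ than $v$, so $v$ is a leaf. This is all the leaf-peeling induction needs, and your main argument does not use this fact at all.
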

The next lemma bounds the probability that two vertices of the tree receive the same image.
\begin{lemma}\label{lem:seed-collision}
Let $\Phi:T\to G$ be sampled according to $\mu_T$. Then, for any
distinct vertices $u,v\in V(T)$,
$\Pr(\Phi(u)=\Phi(v))\le1/\delta(G)$.
\end{lemma}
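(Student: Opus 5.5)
The plan is to reduce to the path between $u$ and $v$ and then root the walk at one endpoint. Let $P$ be the unique path in $T$ from $u$ to $v$. By \Cref{lem:subtree}, $\Phi|_P$ is distributed according to $\mu_P$, so it suffices to treat the case where $T=P$ is a path $u=w_0,w_1,\ldots,w_\ell=v$ with $\ell\ge1$. By \Cref{lem:brw-formula}, $\mu_P$ does not depend on the root. We root it at $w_\ell=v$, so that the walk runs from $v$ towards $u$.

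Under this rooting, $\Phi(w_\ell)$ has the stationary distribution $d(x)/2e(G)$. The sequence $\Phi(w_\ell),\Phi(w_{\ell-1}),\ldots,\Phi(w_0)$ is then a simple random walk on $G$ started from stationarity. We want to bound $\Pr(\Phi(w_0)=\Phi(w_\ell))$.

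Condition on everything except the last step, that is, on $\Phi(w_\ell),\ldots,\Phi(w_1)$. Given $\Phi(w_1)=y$, the vertex $\Phi(w_0)$ is uniform on $N_G(y)$. Since $\Phi(w_\ell)$ is a single fixed vertex under this conditioning, the conditional probability that $\Phi(w_0)=\Phi(w_\ell)$ is at most $1/d(y)\le 1/\delta(G)$. Averaging over the conditioning gives $\Pr(\Phi(u)=\Phi(v))\le 1/\delta(G)$.

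This works for every $\ell\ge1$. When $\ell=1$ the collision probability is actually $0$ if $G$ has no loops, which is consistent with the bound. When $\ell\ge2$ the argument applies as written.

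The only subtle point is ensuring that the final step is a fresh uniform choice independent of the target vertex. Rooting at $v$ guarantees this, because $u$ is then the last vertex sampled and its parent $w_1$ is distinct from $u$. Root-independence from \Cref{lem:brw-formula} together with subtree consistency from \Cref{lem:subtree} justify this reduction, so I do not expect any real obstacle.
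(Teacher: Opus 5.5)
Your proposal is correct and follows essentially the same argument as the paper. You restrict to the $u$--$v$ path via Lemma~\ref{lem:subtree}, invoke root independence, and condition on everything but the last step, so that the final image is uniform on a neighbourhood of size at least $\delta(G)$. The only difference is cosmetic: you root the path at $v$, while the paper roots it at $u$.
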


\begin{proof}
Let $u=w_0,w_1,\ldots,w_q=v$ be the unique path from $u$ to $v$ in
$T$. By Lemma~\ref{lem:subtree}, the restriction of $\Phi$ to this path is
distributed as a tree branching random walk. By root independence, we
may root the path at $u$ and write $X_j=\Phi(w_j)$.

If $q=1$, then $X_0\ne X_1$. Suppose that $q\ge2$. Conditional on
$X_0,\ldots,X_{q-1}$, the vertex $X_q$ is chosen uniformly from
$N_G(X_{q-1})$. Hence
\[
\Pr(X_q=X_0\mid X_0,\ldots,X_{q-1})
=\frac{\mathbf 1_{\{X_0\in N_G(X_{q-1})\}}}{d(X_{q-1})}
\le\frac1{\delta(G)}.
\]
Taking expectations over $(X_0,\ldots,X_{q-1})$ proves the result.
\end{proof}

\subsection{Conditional resampling on trees}
Let $T$ be a tree, let $c\in V(T)$, and write
$N_T(c)=\{a_1,\ldots,a_k\}$, where $k=d_T(c)$. For an ordered
$k$-tuple $\bm u=(u_1,\ldots,u_k)\in V(G)^k$ with $\cap^k_{j=1}N_G(u_j)\neq \emptyset $,
define
\begin{equation}\label{eq:Zk}
Z_k(\bm u)=\sum_{z\in\bigcap_{j=1}^kN_G(u_j)}\frac{1}{d(z)^{k-1}}
\end{equation}
and
\begin{equation}\label{eq:qk}
q_{\bm u}(z)=
\frac{\mathbf 1_{\{z\in\bigcap_{j=1}^kN_G(u_j)\}}}
{Z_k(\bm u)}\cdot\frac{1}{d(z)^{k-1}}.
\end{equation}

\begin{lemma}\label{lem:conditional-law}
With the above notation, let $\Phi:T\to G$ be distributed according to $\mu_T$. Conditional on
the random vector $\Phi|_{V(T)\setminus\{c\}}$, the distribution of
$\Phi(c)$ is
$q_{(\Phi(a_1),\ldots,\Phi(a_k))}$. More precisely, for every
$z\in V(G)$,
\[
\Pr\bigl(\Phi(c)=z\mid \Phi|_{V(T)\setminus\{c\}}\bigr)
=
q_{(\Phi(a_1),\ldots,\Phi(a_k))}(z).
\]
Consequently, replacing $\Phi(c)$ by a conditionally independent sample
from this distribution leaves the distribution of the entire random
homomorphism unchanged; in particular, the resulting homomorphism is
still distributed according to $\mu_T$.
\end{lemma}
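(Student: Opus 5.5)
The plan is to compute the conditional law directly from the explicit formula in Lemma~\ref{lem:brw-formula}. Fix a homomorphism $\psi$ of $T-c$ into $G$; note that $T-c$ may be disconnected, but we only need $\psi$ as an assignment of images to $V(T)\setminus\{c\}$. The event $\{\Phi|_{V(T)\setminus\{c\}}=\psi\}$ is the disjoint union, over $z\in V(G)$, of the events $\{\Phi=\psi_z\}$, where $\psi_z$ extends $\psi$ by $c\mapsto z$. Moreover $\psi_z$ is a homomorphism of $T$ if and only if $\psi$ respects the edges of $T-c$ and $z\in\bigcap_{j=1}^k N_G(\psi(a_j))$, since the edges of $T$ at $c$ are exactly $ca_1,\dots,ca_k$.

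By \eqref{eq:brw-formula},
\[
\mu_T(\psi_z)=\frac{1}{2e(G)}\prod_{v\ne c}\frac{1}{d(\psi(v))^{d_T(v)-1}}\cdot\frac{1}{d(z)^{k-1}},
\]
and the first product does not depend on $z$. Therefore
\[
\Pr\bigl(\Phi(c)=z\mid \Phi|_{V(T)\setminus\{c\}}=\psi\bigr)=\frac{\mu_T(\psi_z)}{\sum_{z'}\mu_T(\psi_{z'})}.
\]
In this ratio the common factor cancels, leaving $\mathbf 1_{\{z\in\bigcap_j N_G(\psi(a_j))\}}\,d(z)^{-(k-1)}/Z_k(\psi(a_1),\ldots,\psi(a_k))$, which is exactly $q_{(\psi(a_1),\ldots,\psi(a_k))}(z)$.

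The only point needing care is well-definedness. We condition only on $\psi$ with positive probability. Such a $\psi$ admits at least one valid $z$, so the common neighbourhood is nonempty and $Z_k>0$, matching the hypothesis under which $q_{\bm u}$ was defined.

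For the resampling consequence, let $\Phi'$ agree with $\Phi$ off $c$, and let $\Phi'(c)$ be drawn from $q_{(\Phi(a_1),\ldots,\Phi(a_k))}$ independently of $\Phi(c)$ given $\Phi|_{V(T)\setminus\{c\}}$. Then the pair $\bigl(\Phi'|_{V(T)\setminus\{c\}},\Phi'(c)\bigr)$ has the same marginal law of the first coordinate as $\Phi$ and, by the computation above, the same conditional law of the second coordinate. Hence $\Phi'$ and $\Phi$ have the same joint law, namely $\mu_T$.

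I expect no step to be a genuine obstacle. The one spot to watch is that conditioning on the possibly disconnected remainder $T-c$ is handled purely through the product formula, with no appeal to the branching structure, so the choice of root plays no role.
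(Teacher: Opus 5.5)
Your proposal is correct and matches the paper's proof: isolate the $z$-dependent factor $d(z)^{-(k-1)}\prod_{j}\mathbf 1_{\{z\psi(a_j)\in E(G)\}}$ in \eqref{eq:brw-formula}, normalise to obtain $q$, and deduce resampling invariance because the law of $\Phi|_{V(T)\setminus\{c\}}$ and the conditional law of $\Phi(c)$ given it are both unchanged. One small fix: your displayed formula for $\mu_T(\psi_z)$ should carry the indicator of $z\in\bigcap_j N_G(\psi(a_j))$, since it only holds when $\psi_z$ is a homomorphism; you already use that indicator in the next step.
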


\begin{proof}
Fix an admissible map
$\psi:V(T)\setminus\{c\}\to V(G)$, that is,
$
\Pr\bigl(\Phi|_{V(T)\setminus\{c\}}=\psi\bigr)>0.
$
Set $u_j=\psi(a_j)$ for $j\in[k]$. Conditional on
$\Phi|_{V(T)\setminus\{c\}}=\psi$, all factors in
\eqref{eq:brw-formula} are fixed except those involving the value
$z=\Phi(c)$. These factors are
\[
\frac{1}{d(z)^{k-1}}\prod_{j=1}^k\mathbf 1_{\{zu_j\in E(G)\}}.
\]
Hence, for every $z\in V(G)$,
\begin{align*}
\Pr\bigl(\Phi(c)=z\mid
\Phi|_{V(T)\setminus\{c\}}=\psi\bigr)
&=
\frac{
\Pr\bigl(\Phi(c)=z,\,
\Phi|_{V(T)\setminus\{c\}}=\psi\bigr)
}{
\displaystyle
\sum_{w\in V(G)}
\Pr\bigl(\Phi(c)=w,\,
\Phi|_{V(T)\setminus\{c\}}=\psi\bigr)
}\\
&=
\frac{\frac{1}{
d(z)^{k-1}}\prod_{j=1}^k\mathbf 1_{\{zu_j\in E(G)\}}
}{
\displaystyle
\sum_{w\in V(G)}
\frac{1}{d(w)^{k-1}}\prod_{j=1}^k\mathbf 1_{\{wu_j\in E(G)\}}
}\\
&=
q_{(u_1,\ldots,u_k)}(z).
\end{align*}

Now replace $\Phi(c)$, conditional on the images of the remaining
vertices, by an independent sample from this same conditional
distribution. Since the distribution of the remaining images is
unchanged and the conditional distribution of $\Phi(c)$ given them is
also unchanged, the resulting random homomorphism is still distributed
according to $\mu_T$.
\end{proof}
We shall use the following averaged bound on the largest value of the
conditional distribution. For a probability distribution $q$ on
$V(G)$, write
$
\norm{q}_\infty:=\max_{z\in V(G)}q(z).
$

\begin{lemma}\label{lem:max-atom}
Let $S_k$ be the star with centre $c$ and leaves
$a_1,\ldots,a_k$. Let $\Phi:S_k\to G$ be distributed according to
$\mu_{S_k}$, and put
$
U=(\Phi(a_1),\ldots,\Phi(a_k)).
$
Then
\[
\E\bigl[\norm{q_U}_\infty\bigr]
\le
\frac{N^{k-1}}{\delta(G)^k}.
\]
\end{lemma}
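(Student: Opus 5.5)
We compute $\E[\norm{q_U}_\infty]$ directly from the explicit formula in Lemma~\ref{lem:brw-formula}, applied to the star $S_k$. The centre has degree $k$ and each leaf has degree $1$. Hence for a homomorphism $\phi$ with $\phi(c)=z$ and $\phi(a_j)=u_j$,
\[
\mu_{S_k}(\phi)=\frac{1}{2e(G)}\cdot\frac{1}{d(z)^{k-1}}.
\]
The leaf factors all equal $1$, because $d_{S_k}(a_j)-1=0$. Summing over $z$ for a fixed leaf tuple $\bm u=(u_1,\dots,u_k)$, the law of $U$ is
\[
\Pr(U=\bm u)=\frac{Z_k(\bm u)}{2e(G)},
\]
whenever $\bigcap_j N_G(u_j)\neq\emptyset$, and $\Pr(U=\bm u)=0$ otherwise. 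This is exactly where the normalising constant $Z_k$ from \eqref{eq:Zk} appears.

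Next we bound the atom. For each admissible $\bm u$, the definition \eqref{eq:qk} gives
\[
\norm{q_{\bm u}}_\infty=\frac{1}{Z_k(\bm u)}\max_{z\in\bigcap_jN_G(u_j)}\frac{1}{d(z)^{k-1}}\le\frac{1}{Z_k(\bm u)\,\delta(G)^{k-1}}.
\]
Multiplying by $\Pr(U=\bm u)$, the factor $Z_k(\bm u)$ cancels. So each admissible tuple contributes at most $1/\bigl(2e(G)\,\delta(G)^{k-1}\bigr)$.

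It remains to count the tuples. There are at most $N^k$ admissible tuples, so
\[
\E\bigl[\norm{q_U}_\infty\bigr]\le\frac{N^k}{2e(G)\,\delta(G)^{k-1}}.
\]
Finally, $2e(G)=\sum_x d(x)\ge N\delta(G)$, which yields the claimed bound $N^{k-1}/\delta(G)^k$.

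The only point needing care is the cancellation of $Z_k$. It requires that the probability mass of $U$ be exactly $Z_k(\bm u)/2e(G)$, which follows from root-independence in Lemma~\ref{lem:brw-formula}, so no real obstacle arises. The degenerate case $k=1$ is consistent with the argument: there $q_{\bm u}$ is uniform on $N_G(u_1)$, and the bound reads $1/\delta(G)$.
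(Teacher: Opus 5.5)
Your proposal is correct and matches the paper's proof step for step. You obtain $\Pr(U=\bm u)=Z_k(\bm u)/2e(G)$ from \eqref{eq:brw-formula}, cancel $Z_k(\bm u)$ against the normalisation of $q_{\bm u}$, bound each of the at most $N^k$ summands by $1/\bigl(2e(G)\,\delta(G)^{k-1}\bigr)$, and finish with $2e(G)\ge N\delta(G)$. (One small remark: the exact law of $U$ follows directly from the product formula, so you do not need root-independence there.)
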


\begin{proof}
Write $Z=\Phi(c)$. By \eqref{eq:brw-formula}, for every
$z,u_1,\ldots,u_k\in V(G)$,
\[
\Pr\bigl(Z=z,\,
U=(u_1,\ldots,u_k)\bigr)
=
\frac{1}{2e(G)}\frac{1}{d(z)^{k-1}}
\prod_{j=1}^k\mathbf 1_{\{zu_j\in E(G)\}}.
\]
Hence, for every $\bm u=(u_1,\ldots,u_k)\in V(G)^k$,
\begin{align*}
\Pr(U=\bm u)
&=
\sum_{z\in V(G)}\Pr(Z=z,U=\bm u)\\
&=
\frac{1}{2e(G)}
\sum_{z\in\bigcap_{j=1}^kN_G(u_j)}\frac{1}{d(z)^{k-1}}\\
&=
\frac{Z_k(\bm u)}{2e(G)}.
\end{align*}
Therefore,
\begin{align*}
\E\bigl[\norm{q_U}_\infty\bigr]
&=
\sum_{\bm u:Z_k(\bm u)>0}
\Pr(U=\bm u)\norm{q_{\bm u}}_\infty\\
&=
\frac1{2e(G)}
\sum_{\bm u:Z_k(\bm u)>0}\ 
\max_{z\in\bigcap_{j=1}^kN_G(u_j)}\ \frac{1}{d(z)^{k-1}}\\
&\le
\frac{N^k}{2e(G)\delta(G)^{k-1}}
\le
\frac{N^{k-1}}{\delta(G)^k},
\end{align*}
by the fact that there are at most $N^k$ ordered tuples $\bm u$, and each nonzero summand is at most $\delta(G)^{1-k}$ and the last inequality follows from
$2e(G)\ge N\delta(G)$.
\end{proof}
\begin{remark}
    For $k=r$, the bound in Lemma~\ref{lem:max-atom} becomes
$N^{r-1}/\delta(G)^r$, which is of constant order when
$\delta(G)$ is of order $N^{1-1/r}$. This is the source of the
exponent in Theorem~\ref{thm:main}.
\end{remark}

\subsection{The random star-flip homomorphism}\label{sec:construction}
Let $H$ be an $r$-star-flip graph and fix a presentation
$R=H_0\subset H_1\subset\cdots\subset H_m=H$ with live-tree families
$\cT_0,\ldots,\cT_m$. For each $i\in[m]$, recall that $x_i$ is the new
vertex, $T_i\in\cT_{i-1}$ is the chosen live tree,
$c_i\in V(T_i)$ is the duplicated vertex, and
$N_{T_i}(c_i)=\{a_{i,1},\ldots,a_{i,k_i}\}$.

We define a random map $\Phi:V(H)\to V(G)$. First sample $\Phi|_R$
according to $\mu_R$. Then expose $x_1,\ldots,x_m$ in the order of the
presentation. When $x_i$ is exposed, sample
\begin{equation}\label{eq:flip-sampling}
\Phi(x_i)\sim q_{U_i},\qquad
U_i=(\Phi(a_{i,1}),\ldots,\Phi(a_{i,k_i})),
\end{equation}
conditional on all previously exposed  images.

\begin{lemma}\label{lem:well-defined}
The random map $\Phi$ defined above is a well-defined 
homomorphism from $H$ to $G$.
\end{lemma}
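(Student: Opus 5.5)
We will prove Lemma~\ref{lem:well-defined} by induction on $i$, using the following invariant $(\mathcal I_i)$, which holds almost surely. Every live tree $T\in\cT_i$ is a subgraph of $H_i$, and $\Phi|_{V(T)}$ is a homomorphism from $T$ to $G$. Together with the statement that $\Phi|_{V(H_i)}$ is a homomorphism from $H_i$ to $G$, this invariant carries the whole argument. No distributional information is needed here. The only issue is that each sampling step \eqref{eq:flip-sampling} must make sense: its support $\bigcap_j N_G(\Phi(a_{i,j}))$ must be nonempty, so that $Z_{k_i}(U_i)>0$ and $q_{U_i}$ is a genuine probability distribution.

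\textbf{Base case.} The map $\Phi|_R$ is sampled from $\mu_R$, which by Definition~\ref{def:brw} is supported on $\operatorname{Hom}(R,G)$. Every member of $\cT_0$ is a subtree of $R=H_0$. Restricting a homomorphism to a subgraph gives a homomorphism, so $(\mathcal I_0)$ holds.

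\textbf{Inductive step.} Assume the invariant holds at stage $i-1$, and consider step $i$.

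First, the neighbours $a_{i,1},\ldots,a_{i,k_i}$ of $c_i$ in $T_i$ lie in $V(T_i)\subseteq V(H_{i-1})$. So their images have already been exposed, and $U_i$ is defined.

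Second, $T_i\in\cT_{i-1}$, so by $(\mathcal I_{i-1})$ the restriction $\Phi|_{V(T_i)}$ is a homomorphism. Hence $\Phi(c_i)$ is adjacent in $G$ to every $\Phi(a_{i,j})$. This means $\Phi(c_i)\in\bigcap_j N_G(\Phi(a_{i,j}))$, so the intersection is nonempty. Since $k_i\ge1$, every vertex in the support contributes a positive term, so $Z_{k_i}(U_i)>0$ and $q_{U_i}$ is a well-defined probability distribution.

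Third, $\Phi(x_i)$ is drawn from the support of $q_{U_i}$, so it is adjacent to every $\Phi(a_{i,j})$. The new edges of $H_i$ are exactly $x_ia_{i,j}$, so $\Phi|_{V(H_i)}$ is again a homomorphism.

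\textbf{Updating the live trees.} Old live trees are unaffected. A new live tree is a subtree $Q$ of $T^{x_i}$ for some $T\in\mathcal S_i$, so it suffices to treat $T^{x_i}$ itself. Its edges are of two kinds:
\begin{itemize}
\item edges of $T$ not incident to $c_i$, which lie in $H_{i-1}$ and are mapped to edges of $G$ by the invariant for $T$;
\item edges $x_ia$ with $a\in N_T(c_i)=N_{T_i}(c_i)$, which are precisely the new edges of $H_i$ just verified.
\end{itemize}
Thus $T^{x_i}\subseteq H_i$ and $\Phi|_{V(T^{x_i})}$ is a homomorphism, which gives $(\mathcal I_i)$. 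After step $m$ we obtain that $\Phi$ is a homomorphism from $H_m=H$ to $G$.

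The one subtle point is the nonemptiness of the common neighbourhood. It relies on $T_i$ being live, and hence already mapped homomorphically, together with the condition $N_T(c_i)=N_{T_i}(c_i)$ in the definition of $\mathcal S_i$, which keeps the invariant alive for the new trees. I expect no real obstacle beyond recording this bookkeeping carefully.
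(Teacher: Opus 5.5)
Your proof is correct and follows essentially the same induction as the paper: $\Phi(c_i)$ witnesses that $\bigcap_j N_G(\Phi(a_{i,j}))\neq\emptyset$, so $Z_{k_i}(U_i)>0$, and the support of $q_{U_i}$ guarantees that the new edges $x_ia_{i,j}$ map to edges of $G$. The one difference is that you explicitly carry the invariant that every live tree in $\cT_i$ is a subgraph of $H_i$, using $N_T(c_i)=N_{T_i}(c_i)$; the paper assumes this implicitly when it writes $E(T_i)\subseteq E(H_{i-1})$.
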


\begin{proof}
Suppose that $x_i$ is about to be exposed. By induction, the
already-exposed map $\Phi|_{H_{i-1}}$ is a homomorphism. Since
$c_i a_{i,j}\in E(T_i)\subseteq E(H_{i-1})$ for every $j\in[k_i]$,
the vertex $\Phi(c_i)$ is adjacent to every entry of $U_i$. Thus the
entries of $U_i$ have a common neighbour, and hence
$Z_{k_i}(U_i)>0$. Therefore, the distribution in
\eqref{eq:flip-sampling} is well-defined.

Moreover, every vertex in the support of $q_{U_i}$ is adjacent to
every entry of $U_i$. Hence every edge incident with $x_i$ is mapped
to an edge of $G$. Induction on $i$ now shows that $\Phi$ is a
homomorphism from $H$ to $G$.
\end{proof}

The key invariance property is that every live tree retains the
branching-random-walk distribution.

\begin{lemma}\label{lem:live-tree}
For every $0\le i\le m$ and every $T\in\cT_i$, the restriction
$\Phi|_T$ is distributed according to $\mu_T$.
\end{lemma}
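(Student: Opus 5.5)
We argue by induction on $i$. For $i=0$, every $T\in\cT_0$ is a subtree of $R$, and $\Phi|_R\sim\mu_R$ by construction, so Lemma~\ref{lem:subtree} gives $\Phi|_T\sim\mu_T$. Now let $i\ge1$ and assume the statement for every tree in $\cT_{i-1}$. The step from $i-1$ to $i$ does not change $\Phi|_{H_{i-1}}$, so every old live tree $T\in\cT_{i-1}$ still satisfies $\Phi|_T\sim\mu_T$. By the definition of $\cT_i$, every new live tree is a subtree $Q$ of some $T^{x_i}$ with $T\in\mathcal S_i$. By Lemma~\ref{lem:subtree}, it therefore suffices to prove that $\Phi|_{T^{x_i}}\sim\mu_{T^{x_i}}$ for every $T\in\mathcal S_i$.

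Fix $T\in\mathcal S_i$. Then $c_i\in V(T)$ and $N_T(c_i)=N_{T_i}(c_i)=\{a_{i,1},\ldots,a_{i,k_i}\}$, and $T^{x_i}$ is $T$ with $c_i$ renamed $x_i$. Let $W=V(T)\setminus\{c_i\}=V(T^{x_i})\setminus\{x_i\}$. By induction, $\Phi|_T\sim\mu_T$. Hence $\Phi|_W$ has the marginal law of $\mu_T$ on $W$. By Lemma~\ref{lem:conditional-law}, the conditional law of $\Phi(c_i)$ given $\Phi|_W$ is $q_{U_i}$. This depends only on the images of $N_T(c_i)$, which equal the entries of $U_i$.

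On the other side, $\Phi(x_i)$ is sampled from $q_{U_i}$ conditionally on all of $\Phi|_{H_{i-1}}$, and hence in particular conditionally on $\Phi|_W$, since $W\subseteq V(H_{i-1})$. Conditioning on the larger $\sigma$-algebra generated by $\Phi|_{H_{i-1}}$ leaves the law $q_{U_i}$ unchanged. By the tower property, $\Phi(x_i)$ given $\Phi|_W$ therefore has law $q_{U_i}$.

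So the pair $(\Phi|_W,\Phi(x_i))$ has the same joint law as $(\Phi|_W,\Phi(c_i))$: the marginal of $\Phi|_W$ is the same, and so is the conditional kernel. Transported along the renaming $c_i\mapsto x_i$, this says $\Phi|_{T^{x_i}}\sim\mu_{T^{x_i}}$, because $\mu_{T^{x_i}}$ is just $\mu_T$ with that vertex relabelled.

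The only subtle point is the conditioning. $\Phi(x_i)$ is drawn conditionally on the whole previous history, which includes $\Phi(c_i)$ and vertices outside $T$. The argument must use only that its conditional law given the history is a function of $U_i$, which is $\Phi|_W$-measurable, and never an independence claim about $\Phi(c_i)$. This is exactly the content of the last sentence of Lemma~\ref{lem:conditional-law}, applied with the extra conditioning absorbed by the tower property. I expect this measurability bookkeeping to be the main thing to write carefully; the rest is formal.
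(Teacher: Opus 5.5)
Your proposal is correct and follows essentially the same route as the paper. Both arguments use induction on $i$ and reduce to $T^{x_i}$ for $T\in\mathcal S_i$ via Lemma~\ref{lem:subtree}. Both then match the conditional law $q_{U_i}$ of $\Phi(c_i)$ given $\Phi|_{V(T)\setminus\{c_i\}}$ (Lemma~\ref{lem:conditional-law}) with that of $\Phi(x_i)$, using that $U_i$ is determined by those images. Your explicit tower-property and joint-law bookkeeping spells out what the paper states more briefly.
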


\begin{proof}
We proceed by induction on $i$. For $i=0$, every tree in $\cT_0$ is a
subtree of the seed tree $R$, so the assertion follows from
Lemma~\ref{lem:subtree}.
Suppose that the assertion holds after step $i-1$. Every old live tree
$T\in\cT_{i-1}$ contains only previously exposed vertices, so
$\Phi|_T$ remains distributed according to $\mu_T$.

It remains to consider the new live trees. Recall that
\[
\mathcal S_i
=
\{T\in\cT_{i-1}:c_i\in V(T)\text{ and }N_T(c_i)=N_{T_i}(c_i)\}.
\]
Fix $T\in\mathcal S_i$, and let $T^{x_i}$ be obtained from $T$ by replacing
$c_i$ with $x_i$. Let $Y_T$ denote the vector of images of the
vertices in $V(T)\setminus\{c_i\}$.
By the induction hypothesis, $\Phi|_T$ is distributed according to
$\mu_T$. Hence, by Lemma~\ref{lem:conditional-law}, conditional on
$Y_T$, the vertex $\Phi(c_i)$ is distributed according to $q_{U_i}$,
where
$
U_i=(\Phi(a_{i,1}),\ldots,\Phi(a_{i,k_i})).
$

On the other hand, by \eqref{eq:flip-sampling}, conditional on all
previously exposed images, $\Phi(x_i)$ is distributed according to
$q_{U_i}$. Since $U_i$ is determined by $Y_T$ and the distribution in
\eqref{eq:flip-sampling} depends on the previously exposed images only
through $U_i$, conditional on $Y_T$, the random variable $\Phi(x_i)$
is also distributed according to $q_{U_i}$.
Thus, conditional on $Y_T$, the random variables $\Phi(c_i)$ and
$\Phi(x_i)$ have the same distribution. Since the images of all other
vertices are unchanged, the random map on $T^{x_i}$ is distributed
according to $\mu_{T^{x_i}}$ under the natural isomorphism replacing
$c_i$ with $x_i$.

Finally, every newly declared live tree is a subtree of some
$T^{x_i}$ with $T\in\mathcal S_i$. The result therefore follows from
Lemma~\ref{lem:subtree}.
\end{proof}

\section{Proof of the main theorem}\label{sec:collisions}
In this section, we prove Theorem \ref{thm:main}. Fix an $r$-star-flip presentation
$R=H_0\subset H_1\subset\cdots\subset H_m=H
$
with live-tree families $\cT_0,\ldots,\cT_m$. For each $i\in[m]$, let
$T_i\in\cT_{i-1}$ be the live tree chosen at step $i$, let
$c_i\in V(T_i)$ be the duplicated vertex, and let $x_i$ be the new
vertex introduced at that step. Write
$
k_i=d_{T_i}(c_i),
$
so that the $i$th operation is a $k_i$-star flip and $1\le k_i\le r$.
Set
$
\ell=|V(R)|$ and $h=|V(H)|.
$
Since each of the $m$ flips introduces exactly one new vertex, we have
$h=\ell+m$.

\begin{proposition}\label{prop:exact-criterion}
With the above notation, let $G$ be an $N$-vertex graph with minimum
degree $\delta>0$. If
\begin{equation}\label{eq:criterion-exact}
\frac{\binom{\ell}{2}}{\delta}
+
\sum_{i=1}^m(\ell+i-1)\frac{N^{k_i-1}}{\delta^{k_i}}
<1,
\end{equation}
then $G$ contains a copy of $H$.
\end{proposition}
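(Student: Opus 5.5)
We take the random homomorphism $\Phi\colon H\to G$ constructed in Section~\ref{sec:construction} and show that, under \eqref{eq:criterion-exact}, it is injective with positive probability. By Lemma~\ref{lem:well-defined}, $\Phi$ is always a homomorphism, so an injective outcome gives a copy of $H$ in $G$. By the union bound it is enough to show that the expected number of colliding pairs $\{u,v\}\subseteq V(H)$, that is, pairs with $\Phi(u)=\Phi(v)$, is less than $1$. We split these pairs into two kinds, matching the two terms in \eqref{eq:criterion-exact}.

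\emph{Seed pairs.} For distinct $u,v\in V(R)$, the restriction $\Phi|_R$ has distribution $\mu_R$. Lemma~\ref{lem:seed-collision} then gives $\Pr(\Phi(u)=\Phi(v))\le 1/\delta$. Summing over the $\binom{\ell}{2}$ pairs gives the first term.

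\emph{Pairs involving a new vertex.} Each remaining pair has the form $\{x_i,w\}$, where $w\in V(H_{i-1})$ was exposed before $x_i$. There are $|V(H_{i-1})|=\ell+i-1$ choices of $w$. We condition on all images exposed before $x_i$. By \eqref{eq:flip-sampling}, $\Phi(x_i)$ is then drawn from $q_{U_i}$, while $\Phi(w)$ is already fixed. Hence
\[
\Pr\bigl(\Phi(x_i)=\Phi(w)\mid \text{past}\bigr)=q_{U_i}(\Phi(w))\le \norm{q_{U_i}}_\infty .
\]
Taking expectations, we still need to bound $\E\norm{q_{U_i}}_\infty$. The star $S$ with centre $c_i$ and leaves $N_{T_i}(c_i)$ is a subtree of the live tree $T_i\in\cT_{i-1}$. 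By Lemma~\ref{lem:live-tree} and Lemma~\ref{lem:subtree}, $\Phi|_S$ has distribution $\mu_{S_{k_i}}$. The vector $U_i$ depends only on $\Phi|_S$, and it is exactly the leaf vector appearing in Lemma~\ref{lem:max-atom}. That lemma gives $\E\norm{q_{U_i}}_\infty\le N^{k_i-1}/\delta^{k_i}$. Summing over $w$ and then over $i$ produces the second term.

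Adding the two contributions, the expected number of collisions is bounded by the left-hand side of \eqref{eq:criterion-exact}, which is less than $1$. So with positive probability there are no collisions, and $\Phi$ is injective. The delicate step is the second kind of pair. There we must make sure that the law of $U_i$ is genuinely the star branching-random-walk law, so that Lemma~\ref{lem:max-atom} applies, even though $U_i$ is built from earlier flipped vertices. This is exactly what the live-tree invariance of Lemma~\ref{lem:live-tree} provides. We also use that the bound $q_{U_i}(\Phi(w))\le\norm{q_{U_i}}_\infty$ holds pointwise, whatever the earlier images are.
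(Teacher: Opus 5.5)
Your proof is correct and follows essentially the same route as the paper. Both use a union bound that splits the pairs into seed pairs, handled by Lemma~\ref{lem:seed-collision}, and pairs whose later-added endpoint is $x_i$; for the latter, conditioning on the past gives the pointwise bound $\norm{q_{U_i}}_\infty$, whose expectation is controlled via Lemmas~\ref{lem:live-tree}, \ref{lem:subtree} and~\ref{lem:max-atom} applied to the star at $c_i$ in the live tree $T_i$ (your expected-number-of-collisions phrasing is just the first-moment form of the same union bound).
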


\begin{proof}
We show that the random homomorphism $\Phi:H\to G$ from
\Cref{sec:construction} is injective with positive probability. By Lemma~\ref{lem:seed-collision}, for each pair of distinct vertices
$u,v\in V(R)$,
$
\Pr\bigl(\Phi(u)=\Phi(v)\bigr)\le 1/\delta.
$
Since there are $\binom{\ell}{2}$ such pairs, the union bound implies
that the probability that two distinct vertices of $R$ receive the
same image is at most $\binom{\ell}{2}/\delta$.

Fix $i\in[m]$ and $y\in V(H_{i-1})$. Conditional on all images exposed
before $x_i$, we have
\[
\Pr\bigl(\Phi(x_i)=\Phi(y)\mid\text{previously exposed images}\bigr)
=
q_{U_i}\bigl(\Phi(y)\bigr)
\le
\norm{q_{U_i}}_\infty.
\]
The star in $T_i$ with centre $c_i$ and leaves
$a_{i,1},\ldots,a_{i,k_i}$ is a subtree of the live tree $T_i$.
Hence, by Lemmas~\ref{lem:live-tree} and~\ref{lem:subtree}, its
image is distributed according to $\mu_{S_{k_i}}$. Taking expectations in the preceding conditional inequality 
and applying Lemma~\ref{lem:max-atom}, we obtain
\[
\Pr\bigl(\Phi(x_i)=\Phi(y)\bigr)
\le
\frac{N^{k_i-1}}{\delta^{k_i}}.
\]
At step $i$, there are exactly $\ell+i-1$ possible choices for $y$.
Every unordered pair $\{u,v\} \subset V(H)$ with at least one endpoint outside $R$ has a unique later-added endpoint, say $x_i$. Thus every such pair is counted exactly once, namely when
its later-added endpoint $x_i$ is considered together with the other
endpoint $y\in V(H_{i-1})$. Therefore, by the union bound,
the probability that two distinct vertices of $H$ receive the same
image is at most
\[
\frac{\binom{\ell}{2}}{\delta}
+
\sum_{i=1}^m(\ell+i-1)\frac{N^{k_i-1}}{\delta^{k_i}}.
\]
By \eqref{eq:criterion-exact}, $\Phi$ is injective with positive
probability, so $G$ contains a copy of $H$.
\end{proof}

\begin{proof}[Proof of Theorem~\ref{thm:main}]
Fix an $r$-star-flip presentation of $H$ with the notation introduced
above. If $m=0$, then $H$ is a tree, and the standard greedy embedding
gives $\ex(n,H)=O_H(n)$.

Suppose that $m\ge1$, and set
\[
B:=\sum_{i=1}^m(\ell+i-1)
=\binom{h}{2}-\binom{\ell}{2}>0.
\]
Let
$
C:=(2B)^{1/r},
$
and let $F$ be an $n$-vertex graph with at least
$Cn^{2-1/r}$ edges. Set $d_0=e(F)/n$ and repeatedly delete a vertex whose current degree is less than $d_0$. The process leaves a nonempty subgraph $G$ with $\delta(G)\geq d_0$. Otherwise, summing the degrees at the moments of deletion would give $e(F)< nd_0=e(F)$, a contradiction. Thus, we obtain a subgraph $G$ with
\[
\delta(G)\ge d_0 \ge Cn^{1-1/r}.
\]
Write $N=\abs{V(G)}$ and $\delta=\delta(G)$.
Since $N\le n$, $\delta<N$, and $k_i\le r$ for every $i\in[m]$, we
have
$
{N^{k_i-1}}/{\delta^{k_i}}
\le
{N^{r-1}}/{\delta^r}.
$
Consequently,
\begin{align*}
\frac{\binom{\ell}{2}}{\delta}
+
\sum_{i=1}^m(\ell+i-1)\frac{N^{k_i-1}}{\delta^{k_i}}
\ \le\ 
\frac{\binom{\ell}{2}}{Cn^{1-1/r}}
+
B\frac{N^{r-1}}{\delta^r}\ \le\ 
\frac{\binom{\ell}{2}}{Cn^{1-1/r}}
+\frac12.
\end{align*}
Once $n$ is large enough, the last expression is less than $1$.
Proposition~\ref{prop:exact-criterion} therefore implies that $G$, and
hence $F$, contains a copy of $H$.
Thus
$
\ex(n,H)=O_H(n^{2-1/r}).
$
\end{proof}

\section{Applications}\label{sec:applications}

\subsection{Products of two trees}
For a tree $T$ rooted at $t_0$, define the depth of a vertex
$t\in V(T)$ by
$\operatorname{dep}_T(t)=\operatorname{dist}_T(t,t_0)$.

\begin{proposition}\label{prop:tree-product-flip}
For every pair of nontrivial trees $T$ and $S$, the graph
$T\Box  S$ is a $2$-star-flip graph.
\end{proposition}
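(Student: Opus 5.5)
We take as seed tree the "cross"
\[
R=(T\times\{s_0\})\cup(\{t_0\}\times S),
\]
where $t_0\in V(T)$ and $s_0\in V(S)$ are fixed roots. Its edges are all product edges inside the row $T\times\{s_0\}$ and inside the column $\{t_0\}\times S$. This is a spanning tree of $(T\times\{s_0\})\cup(\{t_0\}\times S)$.

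The remaining vertices $(t,s)$ with $t\ne t_0$ and $s\ne s_0$ are added in nondecreasing order of $\operatorname{dep}_T(t)+\operatorname{dep}_S(s)$, with ties broken arbitrarily. Let $t'$ be the parent of $t$ in $T$ and $s'$ the parent of $s$ in $S$. Then $(t',s)$ and $(t,s')$ have strictly smaller depth sum, so they are already present. We create $(t,s)$ by a $2$-star flip at $c=(t',s')$ with neighbour set $\{(t',s),(t,s')\}$, using the live path
\[
P_{t,s}=(t',s)\,(t',s')\,(t,s').
\]
At that moment the new vertex receives exactly these two neighbours. Its other neighbours in $T\Box S$ are $(t'',s)$ for $t''$ a child of $t$, and $(t,s'')$ for $s''$ a child of $s$. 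Each of these is added later and becomes adjacent to $(t,s)$ at its own flip, by the same rule. Hence the final graph is exactly $T\Box S$, provided each $P_{t,s}$ is live at the time it is needed.

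The whole difficulty is to prove liveness, and I would do it through an invariant. Orient the coordinates so that a step in the $T$-coordinate is ``out'' if it moves away from $t_0$, and a step in the $S$-coordinate is ``down'' if it moves towards $s_0$. Call a path in $T\Box S$ a \emph{staircase} if, traversed in one direction, every $T$-step is out and every $S$-step is down. Such a path is automatically a path, with no repeated vertices, because both coordinates move monotonically. The invariant is:
\begin{quote}
after step $i$, every staircase path all of whose vertices and edges lie in $H_i$ belongs to $\mathcal T_i$.
\end{quote}

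For $i=0$, a staircase path inside $R$ descends the column towards $(t_0,s_0)$ and then moves out along the row. It is therefore a subtree of $R$, and hence live.

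For the inductive step, let $x=(t,s)$ be the vertex just added, and let $Q$ be a staircase path in $H_i$ through $x$. At this time $x$ has only the neighbours $(t',s)$ and $(t,s')$. Since a staircase path has no repeated vertices, $x$ cannot be an endpoint using a single neighbour twice, so the two cases are the following.
\begin{itemize}
\item[(a)] $x$ is interior to $Q$. Then $Q$ contains $(t',s)\,x\,(t,s')$, which is an out-then-down corner. Replace $x$ by $c=(t',s')$. The result $Q'$ has a down-then-out corner at $c$ and is still a staircase. Moreover $Q'$ avoids $x$ and, by monotonicity, does not revisit $c$. So $Q'$ is a staircase in $H_{i-1}$, and it is live by the induction hypothesis. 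Since $N_{Q'}(c)=\{(t',s),(t,s')\}=N_{P_{t,s}}(c)$, we have $Q'\in\mathcal S_i$, and $Q=(Q')^{x}$ is declared live.
\item[(b)] $x$ is an endpoint of $Q$. Then $Q$ extends to a path through $x$ by appending the other neighbour of $x$, which is also present. The extension is still a staircase and is live by case (a), and $Q$ is a subtree of it, hence live.
\end{itemize}
Staircases in $H_i$ that avoid $x$ are staircases in $H_{i-1}$, and live trees are never removed.

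Finally, each required path $P_{t,s}$ is a down-then-out staircase whose vertices are all present before the flip. By the invariant it is live, so every flip above is legal and has size $2$.

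I expect the main obstacle to be exactly this invariant. The natural first attempt is to track larger ``row'' trees, but these fail because the vertex $(t_0,s')$ has too many neighbours inside them to serve as the centre of a $2$-star flip. Restricting the invariant to staircase paths avoids this problem, since every vertex of a path has degree at most $2$ in it. The remaining points to verify carefully are the degenerate situations: that case (b) always has the required extension, and that $Q'$ has no repeated vertex, both of which follow from the monotonicity of staircases.
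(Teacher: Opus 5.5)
Your proof is correct. It shares the paper's skeleton: the same cross-shaped seed $R$, the same flip at $(t',s')$ with neighbour set $\{(t',s),(t,s')\}$, and the same depth-sum order. Where it differs is in how liveness of the required path $(t',s)\,(t',s')\,(t,s')$ is established.

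The paper fixes the target $(t,s)$ and its root paths $t_0,\dots,t_p$ and $s_0,\dots,s_q$. It starts from the hook path in $R$ running from $(t_0,s_q)$ through $(t_0,s_0)$ to $(t_p,s_0)$, and tracks this single path forward through the flips inside the rectangle. By induction on anti-diagonal layers, with a check that flips in the same layer do not interfere, it shows that the needed corner eventually appears.

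You instead prove a target-independent invariant: every out/down staircase path in $H_i$ is live. The proof undoes the most recent flip. A staircase through $x$ becomes a staircase through $c$ in $H_{i-1}$, and that staircase lies in $\mathcal S_i$, so its flip is declared live. The paper's tracked paths are particular staircases (from $(t_0,s_q)$ to $(t_p,s_0)$), so your statement is stronger. Your argument is a purely local one-step check, needs no layer bookkeeping, and never uses the depth-sum order beyond the availability of $(t',s)$ and $(t,s')$. The paper's version, in exchange, names explicitly which concrete live tree each flip updates.

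Two trivial points should be tidied up:
\begin{itemize}
\item In case (b), the one-vertex path $Q=\{x\}$ has no ``other neighbour''. It is a subtree of $(t',s)\,x\,(t,s')$, which is live by case (a).
\item You should state explicitly that the edges $(t',s)(t',s')$ and $(t',s')(t,s')$ lie in $H_{i-1}$. Each is either a seed edge or was created when $(t',s)$, respectively $(t,s')$, was added. This is needed both for $Q'\subseteq H_{i-1}$ and for applying the invariant to $P_{t,s}$.
\end{itemize}
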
\begin{proof}
Choose roots $t_0\in V(T)$ and $s_0\in V(S)$. For every
$t\in V(T)\setminus\{t_0\}$, let $t^-$ denote the neighbour of $t$ on
the unique path from $t$ to $t_0$. Define $s^-$ analogously for every
$s\in V(S)\setminus\{s_0\}$. Take as the seed tree
$R=(T\times\{s_0\})\cup(\{t_0\}\times S)$.
The two coordinate trees intersect only at $(t_0,s_0)$, so $R$ is a
tree.

The vertices outside $R$ are precisely the pairs $(t,s)$ with
$t\ne t_0$ and $s\ne s_0$. We shall add each such vertex $(t,s)$ by
performing a $2$-star flip at $(t^-,s^-)$ with neighbour set
$\{(t^-,s),(t,s^-)\}$. It therefore remains to show that, when $(t,s)$
is processed, the path $(t^-,s)-(t^-,s^-)-(t,s^-)$ is live.

We add the remaining vertices in nondecreasing order of
$D(t,s):=\operatorname{dep}_T(t)+\operatorname{dep}_S(s)$, processing
vertices with the same value of $D(t,s)$ in an arbitrary order. To
verify that each prescribed flip is legal, fix a vertex $(t,s)$ when
it is processed. By induction on the depth sum, every vertex of depth
sum smaller than $D(t,s)$ has already been added by a legal
$2$-star flip.

Write the root paths from $t_0$ to $t$ and from $s_0$ to $s$ as
\[
t_0,t_1,\ldots,t_p=t
\qquad\text{and}\qquad
s_0,s_1,\ldots,s_q=s.
\]
Thus $p,q\geq 1$ and $D(t,s)=p+q$. The path
\[
(t_0,s_q),(t_0,s_{q-1}),\ldots,(t_0,s_0),
(t_1,s_0),\ldots,(t_p,s_0)
\]
is a subtree of the seed tree and is therefore live. We take this as the initial tracked path associated with $(t,s)$.
Starting from this initial tracked path, we update it successively as
the flips preceding the processing of $(t,s)$ are performed. Whenever a vertex $(t_i,s_j)$ with $1\le i\le p$ and $1\le j\le q$
is processed and the tracked path contains the consecutive subpath
\[
(t_{i-1},s_j)-(t_{i-1},s_{j-1})-(t_i,s_{j-1}),
\]
the prescribed flip adding $(t_i,s_j)$ is performed at
$(t_{i-1},s_{j-1})$ with neighbour set
$\{(t_{i-1},s_j),(t_i,s_{j-1})\}$. Since the tracked path is live and
$(t_{i-1},s_{j-1})$ has precisely these two neighbours in it, the
tracked path is one of the live trees updated at this step, regardless
of which live tree is chosen to perform the flip. Hence the live-tree
update rule declares the path obtained by replacing
$(t_{i-1},s_{j-1})$ with $(t_i,s_j)$ to be live. We take this new path
as the tracked path. Vertices $(t',s')$ outside
$\{(t_i,s_j):1\le i\le p,\ 1\le j\le q\}$ do not affect the argument,
since every previously live tree remains live after each flip.
The claim below shows that the required subpath is present whenever
the corresponding vertex is processed.
\begin{claim}\label{clm:tracked-live-path}
Let $2\le d\le p+q$. After all vertices $(t_i,s_j)$ with
$1\le i\le p$, $1\le j\le q$, and $i+j<d$ have been added by legal
$2$-star flips, the tracked path is live and contains, for every pair
$(i,j)$ with $1\le i\le p$, $1\le j\le q$, and $i+j=d$, the
consecutive subpath
\[
(t_{i-1},s_j)-(t_{i-1},s_{j-1})-(t_i,s_{j-1}).
\]
\end{claim}

Assuming the claim, apply it with $d=p+q$. The tracked live path then
contains
\[
(t_{p-1},s_q)-(t_{p-1},s_{q-1})-(t_p,s_{q-1})
\]
as a consecutive subpath. We may therefore perform a $2$-star flip at
$(t_{p-1},s_{q-1})$, adding $(t,s)$ adjacent precisely to
$(t_{p-1},s_q)$ and $(t_p,s_{q-1})$.

Proceeding through all vertices in nondecreasing order of their depth
sums adds every vertex of $T\Box S$. All edges of the seed tree are
present initially, and every other Cartesian-product edge is added
when its endpoint of larger depth sum is introduced. No other edge is
added. Hence the resulting graph is exactly $T\Box S$, proving that
$T\Box S$ is a $2$-star-flip graph.
\end{proof}

\begin{proof}[Proof of the claim]
We proceed by induction on $d$. For $d=2$, the assertion follows from
the initial subpath
\[
(t_0,s_1)-(t_0,s_0)-(t_1,s_0).
\]

Suppose that the assertion holds for some $d<p+q$. For every pair
$(i,j)$ with $i+j=d$, the outer induction hypothesis ensures that the
prescribed flip adding $(t_i,s_j)$ has already been performed legally.
At the moment when this flip is performed, the tracked path contains
\[
(t_{i-1},s_j)-(t_{i-1},s_{j-1})-(t_i,s_{j-1}),
\]
and the middle vertex $(t_{i-1},s_{j-1})$ has precisely the neighbour set used by the flip.
Consequently, the live-tree update rule also updates the tracked path,
replacing $(t_{i-1},s_{j-1})$ with $(t_i,s_j)$.
These updates do not interfere with one another. Their centres are
distinct and have depth sum $d-2$, whereas the endpoints of all
relevant subpaths have depth sum $d-1$. Thus the updates corresponding to pairs with $i+j=d$ do not interfere
with one another.

Now fix $(i,j)$ with $i+j=d+1$. If $i>1$, the flip adding
$(t_{i-1},s_j)$ makes it adjacent to $(t_{i-1},s_{j-1})$ in the
tracked path; if $i=1$, this adjacency belongs to the initial path.
Similarly, $(t_i,s_{j-1})$ is adjacent to
$(t_{i-1},s_{j-1})$, either by the flip adding it or by the initial
path when $j=1$. Moreover, $(t_{i-1},s_{j-1})$ is not replaced in this
layer, since its depth sum is $d-1$. Hence the tracked path contains
\[
(t_{i-1},s_j)-(t_{i-1},s_{j-1})-(t_i,s_{j-1}),
\]
proving the induction step.
\end{proof}
\begin{proof}[Proof of Theorem~\ref{cor:tree-products}]
By Proposition~\ref{prop:tree-product-flip} and \Cref{thm:main},
$\ex(n,T\Box  S)=O_{T,S}(n^{3/2})$. Since both trees contain an edge,
$T\Box  S$ contains a copy of $C_4$. Consequently, the classical estimate
$\ex(n,C_4)=\Theta(n^{3/2})$
\cite{KST54,Brown1966,ERS1966} gives the matching lower bound.
\end{proof}
\begin{remark}
A closer inspection of the argument based on
Proposition~\ref{prop:exact-criterion} gives a leading constant 
$
O(|V(T)|\,|V(S)|).
$ For trees with particular structures, this dependence can be further
improved using the more precise criterion, but we do not pursue such
refinements here.
\end{remark}
\subsection{One exceptional vertex of unbounded degree}
We next recover the classical all-but-one bounded-degree result.

\begin{proposition}\label{prop:exceptional-supergraph}
Fix an integer $r\ge2$. Let $L=(A,B;E)$ be a bipartite graph with no isolated vertices. Suppose
that there is a vertex $b_0\in B$ such that $d_L(b)\le r$ for every
$b\in B\setminus\{b_0\}$. Then $L$ is a subgraph of an
$r$-star-flip graph on the same vertex set.
\end{proposition}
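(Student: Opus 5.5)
We follow the sketch given after Corollary~\ref{cor:exceptional-degree} and make it precise in the notation of the proposition. Let $L'$ be the graph on $V(L)$ obtained from $L$ by adding every missing edge $b_0a$ with $a\in A$. We will show that $L'$ is an $r$-star-flip graph; since $L\subseteq L'$ and $V(L')=V(L)$, this proves the proposition.

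\textbf{Seed tree.} Take as seed tree the star $R$ with centre $b_0$ and leaf set $A$. It is a tree because $A\neq\emptyset$, which holds since $L$ has no isolated vertices (and if $L$ has an edge at all). Every subtree of $R$ is live in $\cT_0$. In particular, for every nonempty set $X\subseteq A$, the substar $R_X$ with centre $b_0$ and leaf set $X$ is live.

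\textbf{Flips.} Enumerate $B\setminus\{b_0\}=\{b_1,\ldots,b_m\}$. At step $i$, take $T_i=R_{N_L(b_i)}\in\cT_0\subseteq\cT_{i-1}$ and $c_i=b_0$. Then $k_i=d_{T_i}(b_0)=d_L(b_i)$, so $1\le k_i\le r$, where $k_i\ge1$ because $b_i$ is not isolated. Identify the new vertex $x_i$ with $b_i$; its neighbourhood is then exactly $N_{T_i}(b_0)=N_L(b_i)$. This step is legal because live trees are never removed, so $T_i$ is still live when it is used. No new vertex of $A$ is ever created, and at each step the only edges added are those between $b_i$ and $N_L(b_i)\subseteq A$.

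\textbf{Verification.} After all $m$ steps, the vertex set is $\{b_0\}\cup A\cup\{b_1,\ldots,b_m\}=V(L)$. The edge set consists of all edges $b_0a$ with $a\in A$, together with the edges between each $b_i$ and $N_L(b_i)$. This is exactly $E(L')$, and $E(L')\supseteq E(L)$. Hence $L'=H_m$ is an $r$-star-flip graph on the same vertex set, with $L\subseteq L'$.

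No step is a genuine obstacle. The only points needing care are the bookkeeping: identifying $x_i$ with $b_i$, so that the vertex set is unchanged, and checking $k_i\ge1$, which is where the hypothesis that $L$ has no isolated vertices is used.
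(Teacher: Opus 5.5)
Your proposal is correct and matches the paper's proof: both add all edges from $b_0$ to $A$, seed with the star centred at $b_0$ on leaf set $A$, and, for each $b\in B\setminus\{b_0\}$, flip $b_0$ inside the live substar on $N_L(b)$, which is a flip of size $d_L(b)\le r$. Your explicit checks — that $k_i\ge1$ uses the no-isolated-vertices hypothesis, and that identifying $x_i$ with $b_i$ leaves the vertex set unchanged — are bookkeeping the paper leaves implicit.
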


\begin{proof}
Add all missing edges between $b_0$ and $A$, and denote the resulting
graph by $L^+$. Take as the seed tree the star with centre $b_0$ and
leaf set $A$. For each $b\in B\setminus\{b_0\}$, the substar induced by
$\{b_0\}\cup N_L(b)$ is a subtree of the seed and hence is
live. Duplicate its centre $b_0$. The new vertex has neighbourhood
precisely $N_L(b)$, and the flip has size $d_L(b)\le r$. Performing
these flips for all $b\ne b_0$ produces $L^+$. Since $L\subseteq L^+$,
the result follows.
\end{proof}

\begin{proof}[Proof of Corollary~\ref{cor:exceptional-degree}]
Deleting isolated vertices does not affect the Tur\'an number for all
sufficiently large $n$, so we may assume that $L$ has no isolated
vertices. By Proposition~\ref{prop:exceptional-supergraph}, the graph $L$ is a
subgraph of an $r$-star-flip graph $L^+$. Therefore,
$\ex(n,L)\le\ex(n,L^+)=O_L(n^{2-1/r})$ by \Cref{thm:main}.
\end{proof}

\section*{Acknowledgements}
Lanchao Wang was supported by the NSFC under grant number 12471327, the
National Key R\&D Program of China under grant number 2024YFA1013900,
the China Scholarship Council, and the Institute for Basic Science
(IBS-R029-C4). Caihong Yang was supported by the Scientific Research
Funds at China University of Geosciences (Wuhan) (Project No.2026039).

\bibliographystyle{abbrv}
\bibliography{main}
\end{document}